\theoremstyle{plain}
\newtheorem{theorem}{Theorem}
\newtheorem{corollary}[theorem]{Corollary}
\newtheorem{proposition}[theorem]{Proposition}
\theoremstyle{definition}
\theoremstyle{remark}
\newcommand{\E}{\mathbb E\,}
\newcommand{\R}{\mathbb{R}}
\newcommand{\C}{\mathbb{C}}
\newcommand{\s}{\mathbb{S}}
\newcommand{\B}{\mathbb{B}^d}
\newcommand{\diag}{\mathop{\mathrm{diag}}\nolimits}
\newcommand{\conv}{{\mathrm{conv}}}
\newcommand{\bx}{\mathbf{x}}
\newcommand{\by}{\mathbf{y}}
\newcommand{\bz}{\mathbf{z}}
\newcommand{\bu}{\mathbf{u}}
\newcommand{\bv}{\mathbf{v}}
\newcommand{\ba}{\mathbf{a}}
\newcommand{\bb}{\mathbf{b}}
\newcommand{\be}{\mathbf{e}}
\newcommand{\dd}{{\rm d}}
\begin{document}

\author[A.~Gusakova]{Anna Gusakova}
\address{Anna Gusakova, Institute of Mathematical Stochastics, M\"unster University, Germany}
\email{gusakova@uni-muenster.de}

\author[E.~Spodarev]{Evgeny Spodarev}
\address{Evgeny Spodarev, Institute of Stochastics, Ulm University, Germany}
\email{evgeny.spodarev@uni-ulm.de}

\author[D.~Zaporozhets]{Dmitry Zaporozhets}
\address{Dmitry Zaporozhets, St.~Petersburg Department of Steklov Institute of Mathematics, Russia}
\email{zap1979@gmail.com}

\title[Intrinsic volumes of ellipsoids]{Intrinsic volumes of ellipsoids}
\keywords{Convex body,  intrinsic volume, mixed volume, querma{\ss}integral, Minkowski functional, support function, mixed discriminant, ellipsoid, polar ellipsoid, hypergeometric R-function, Weinstein-Aronszajn identity}
\subjclass[2010]{Primary: 52A20, 52A39; secondary: 52A22, 52A38.}
\thanks{The work of ES was supported by EIMI (Leonard Euler International Mathematical Institute, Saint Petersburg), grant 075-15-2019-1620 as of 08/11/2019. AG  was supported by the DFG under Germany's Excellence Strategy  EXC 2044 -- 390685587, \textit{Mathematics M\"unster: Dynamics - Geometry - Structure}. The work of DZ was supported by the Foundation for the Advancement of Theoretical Physics and Mathematics “BASIS” and by RFBR and DFG according to the research project № 20- 51-12004.
}


\begin{abstract}
We deduce explicit formulae for the intrinsic volumes of an ellipsoid in $\mathbb R^d$, $d\ge 2$, in terms of elliptic integrals. Namely, for an ellipsoid  ${\mathcal E}\subset \mathbb R^d$ with semiaxes $a_1,\ldots, a_d$ we show that
 
\begin{align*}
V_k({\mathcal E})=\kappa_k\sum_{i=1}^da_i^2s_{k-1}(a_1^2,\dots,a_{i-1}^2,a_{i+1}^2,\dots,a_d^2)\int_0^{\infty}{t^{k-1}\over(a_i^2t^2+1)\prod_{j=1}^d\sqrt{a_j^2t^2+1}}\,\rm{d}t
\end{align*}
for all $k=1,\ldots,d$,
where $s_{k-1}$ is the $(k-1)$-th elementary symmetric polynomial and $\kappa_k$ is the volume of the $k$-dimensional unit ball.
Some examples of the intrinsic volumes $V_k$ with low and high $k$ are given where our formulae look particularly simple. {As an application we derive new formulae for the expected $k$-dimensional volume of random $k$-simplex in an ellipsoid and random Gaussian $k$-simplex.}
\end{abstract}

\maketitle
\section{Introduction and main result}
For a non-empty convex compact set $K\subset \R^d$, consider its parallel set  of radius $r>0$ defined as
 $K+ r\B$, where $\B$ is the $d$-dimensional unit ball and the operation of {\it Minkowski addition} means the pointwise sum of two sets. 
The well-known {\it Steiner formula} writes the volume $|\cdot|_d$ (i.e., the Lebesgue measure) of $K+r\B$ as a polynomial of degree $d$ in $r$:
\begin{align}\label{1800}
    |K+ r\B|_d=\sum\limits_{k=0}^d \kappa_{d-k}V_{k}(K) r^{d-k}, \quad r\ge 0,
\end{align}
where $\kappa_k:=\pi^{k/2}/\Gamma\left(\frac k2+1\right)$ is the  volume of the $k$-dimensional unit  ball. The coefficients $V_k(K)$, $k=0,\ldots, d,$ above are called {\it intrinsic volumes} of $K$.  They are normalized in a way that  if $K$ is $k$--dimensional, then $V_k(K)$ coincides with the $k$-dimensional volume of $K$.

The intrinsic volumes  as well as related  Minkowski functionals
(or querma{\ss}integrals) and tensors of (poly)convex 
bodies  and sets with positive reach   play an important role in convex geometry and in the applied fields, such as fractal and topological data analysis, compare e.g. \cite{MeckeStoyan00,MeckeStoyan02,Torq02,Planck2016,collaboration2015planck,SpoStrWin15}.

Although intrinsic volumes are basic and fundamental quantities of convex bodies, their computation is not a simple task even for those classical shapes like ellipsoids.
So far, only indirect results containing the computation of the surface area as R hypergeometric function \cite{Carlson66}, an Abelian integral \cite{Tee05} or Lauricella hypergeometric function \cite{Rivin07} and the expression of  intrinsic volumes in terms of Gaussian determinants  \cite{KZ12} are available. Thus, Theorem 1.1 of \cite{KZ12} states that for an arbitrary ellipsoid ${\mathcal E}\subset\R^d$ with semiaxes $a_1,\ldots, a_d$ we have
$$
V_k({\mathcal E})=\frac{(2\pi)^{k/2}}{k!} \E \sqrt{ \det(\langle A \xi_i, A\xi_j \rangle)_{i,j=1}^k}, 
$$
where $A=\mathrm{diag}(a_1,\ldots, a_d)$ and $\xi_1,\dots, \xi_k$ are i.i.d. standard Gaussian vectors in $\R^d$. Decomposing $\xi_i$ into independent spherical and radial parts as
$
	\xi_i=\eta_i\cdot\|\xi_i\|
$
and noting that $\eta_1,\dots, \eta_d$ are i.i.d. random vectors in $\R^d$ uniformly distributed on the unit sphere $\s^{d-1}$ equipped with  the Hausdorff measure $\sigma(\cdot)$ normalized by $\sigma(\s^{d-1})$ yields
\begin{align}\label{1113b}
	V_k({\mathcal E})&=\frac{(2\pi)^{k/2}}{k!} \E\|\xi_1\|\dots \E\|\xi_k\|\cdot\E \sqrt{ \det(\langle A\eta_i, A\eta_j \rangle)_{i,j=1}^k}
	\\\notag&
	=\frac{1}{k!\kappa_{d-1}^k} \int_{(\s^{d-1})^k} \sqrt{\det(\langle A \bu_i, A\bu_j \rangle)_{i,j=1}^k}\, \sigma(\dd\bu_1)\ldots \sigma(\dd\bu_k),
\end{align}
where in the second equation we used that
\begin{align*}
    \E \|\xi_i\|=\frac{\sqrt{2}\,\Gamma(\frac{d+1}{2})}{\Gamma(\frac{d}{2})},\quad \sigma(\s^{d-1})=\frac{2\pi^{d/2}}{\Gamma(\frac{d}{2})}.
\end{align*}
The later $k$-fold integration in~\eqref{1113b} makes an explicit computation of $V_k({\mathcal E})$  particularly complex.

The problem of deriving explicit formulae for $V_k({\mathcal E})$ is also deeply connected to the hypothesis that ellipsoids are uniquely determined (up to a rigid motion) by their intrinsic volumes. It is solved positively so far only in $d=2,3$ \cite{PetTar20} as well as for the dual volumes \cite{myroshnychenko2020unique}. 

The main result of our paper gives the formula for $V_k(\mathcal E)$ in terms of one-dimensional elliptic integrals. Before formulating it,  for a tuple $(t_1,\dots,t_n)$ denote by $s_m(t_1,\dots,t_n)$ (where $m\leq n$) the $m$-th elementary symmetric polynomial of $t_1,\dots,t_n$  defined as
\begin{align*}
    s_{m}(t_1,\dots,t_n):=\sum_{1\leq i_1<\ldots<i_m\leq n}\prod_{j=1}^mt_{i_j}.
\end{align*}
Now we are ready to formulate the main theorem.
\begin{theorem}\label{0948}
For every $k\in\{1,\ldots,d\}$ we have
\begin{align*}
V_k({\mathcal E})=\kappa_k\sum_{i=1}^da_i^2s_{k-1}(a_1^2,\dots,a_{i-1}^2,a_{i+1}^2,\dots,a_d^2)\int_0^{\infty}{t^{k-1}\over(a_i^2t^2+1)\prod_{j=1}^d\sqrt{a_j^2t^2+1}}\dd t.
\end{align*}
\end{theorem}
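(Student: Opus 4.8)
The plan is to bypass the $k$-fold integral \eqref{1113b}, whose square-root integrand is the real source of difficulty, and instead start from the support-function (mixed-discriminant) representation of intrinsic volumes. Writing $\mathcal E=A\B$ with $A=\diag(a_1,\dots,a_d)$, the support function is $h(u)=\|Au\|=(u^{\top}A^2u)^{1/2}$, which is smooth and positive on $\s^{d-1}$. Let $R_1(u),\dots,R_{d-1}(u)$ denote the principal radii of curvature of $\partial\mathcal E$ at the boundary point with outer normal $u$, equivalently the eigenvalues of the Hessian $D^2h(u)$ restricted to $u^{\perp}$. Expressing $V_k(\mathcal E)=\binom{d}{k}\kappa_{d-k}^{-1}V(\mathcal E[k],\B[d-k])$ and writing the mixed volume as an integral of the corresponding mixed area measure (a mixed discriminant of $k-1$ copies of $D^2h$ and $d-k$ copies of the identity on $u^\perp$) gives
\[
V_k(\mathcal E)=\frac{1}{k\,\kappa_{d-k}}\int_{\s^{d-1}}h(u)\,s_{k-1}\big(R_1(u),\dots,R_{d-1}(u)\big)\,\sigma(\dd u),
\]
the constant being fixed by the case $\mathcal E=\B$, where all $R_i\equiv1$.

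The key computational input is an explicit generating function for the $s_{k-1}(R)$. Since $h$ is $1$-homogeneous, $D^2h(u)\,u=0$, so the eigenvalues of $D^2h(u)$ on $\R^d$ are $0,R_1,\dots,R_{d-1}$ and hence $\det(I+\lambda D^2h(u))=\sum_{m=0}^{d-1}s_m(R)\lambda^m$. A direct differentiation gives $D^2h=h^{-1}A^2-h^{-3}(A^2u)(A^2u)^{\top}$, and the matrix-determinant lemma (the rank-one Weinstein--Aronszajn identity) yields
\[
\det\!\big(I+\lambda D^2h\big)=\prod_{j=1}^d\Big(1+\tfrac{\lambda}{h}a_j^2\Big)\Big(1-\frac{\lambda}{h^3}\sum_{j=1}^d\frac{a_j^4u_j^2}{1+\frac{\lambda}{h}a_j^2}\Big).
\]
Extracting the coefficient of $\lambda^{k-1}$ and using $h^2=\sum_\ell a_\ell^2u_\ell^2$, I obtain $s_{k-1}(R)=h^{-(k-1)}s_{k-1}(a_1^2,\dots,a_d^2)-h^{-(k+1)}\sum_j a_j^4u_j^2\,s_{k-2}^{(j)}$, where $s_{m}^{(j)}:=s_{m}(a_1^2,\dots,\widehat{a_j^2},\dots,a_d^2)$.

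It then remains to integrate $h\,s_{k-1}(R)$, a sum of homogeneous terms of the form $u_j^{2\varepsilon}\|Au\|^{-p}$, over $\s^{d-1}$. For each such term I pass to a Gaussian integral via $\int_{\s^{d-1}}f\,\sigma(\dd u)=\tfrac{2}{\Gamma(\frac{m+d}{2})}\int_{\R^d}f(x)e^{-\|x\|^2}\dd x$ with $m=\deg f$, linearise the negative power by $\|Ax\|^{-p}=\Gamma(\tfrac p2)^{-1}\int_0^\infty\tau^{p/2-1}e^{-\tau\|Ax\|^2}\dd\tau$, and carry out the now-Gaussian $x$-integral, which produces $\det(I+\tau A^2)^{-1/2}=\prod_j(1+\tau a_j^2)^{-1/2}$ together with a second-moment factor $\tfrac12(1+\tau a_j^2)^{-1}$ for the terms carrying $u_j^2$. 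The substitution $\tau=t^2$ turns each resulting one-dimensional integral into the shape $\int_0^\infty t^{\,\cdots}\prod_j(1+t^2a_j^2)^{-1/2}(\cdots)\,\dd t$ of the theorem.

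The main obstacle, and the step that makes the final formula so clean, is the recombination. The contribution of $h^{-(k-1)}s_{k-1}(a^2)$ initially carries $t^{k-3}$; an integration by parts against $\tfrac{\dd}{\dd t}\big[t^{k-2}\prod_j(1+t^2a_j^2)^{-1/2}\big]$ converts it into a $t^{k-1}$-integral weighted by $\sum_\ell a_\ell^2(1+t^2a_\ell^2)^{-1}$, while the gamma factors collapse through $\Gamma(\tfrac k2)=\tfrac{k-2}{2}\Gamma(\tfrac{k-2}{2})$ so that both pieces acquire the same constant. Grouping by $(1+t^2a_i^2)^{-1}$ and using the elementary identity $a_i^2 s_{k-1}(a^2)=a_i^2 s_{k-1}^{(i)}+a_i^4 s_{k-2}^{(i)}$, the unwanted $a_i^4 s_{k-2}^{(i)}$ terms cancel exactly, leaving the coefficient $a_i^2 s_{k-1}^{(i)}$ and an overall constant that simplifies precisely to $\kappa_k$. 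I expect the bookkeeping of these constants, together with a separate regularised treatment of the boundary terms for the small values of $k$ where the $t^{k-3}$ integral fails to converge at the origin, to be the most delicate part of the argument.
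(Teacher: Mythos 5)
Your proposal is correct in substance and reaches the theorem by a route that differs from the paper's in its key computational step. Both arguments start from the same representation of $V_k(\mathcal E)$ as a spherical integral of $h$ against the $(k-1)$-st mixed area measure, and both convert the resulting spherical integrals into one-dimensional ones by the same Gaussian linearisation $h^{-\beta}\propto\int_0^\infty t^{\beta-1}e^{-h^2 t^2}\,\dd t$ (this is exactly the paper's Proposition~\ref{0831}). Where you diverge is in how the integrand over $\s^{d-1}$ is computed. The paper evaluates the mixed discriminant of $k-1$ copies of $H^i_{\mathcal E}$ and $d-k$ copies of $H^i_{\B}$ head-on from its permutation-sum definition, via block determinants, two applications of the Weinstein--Aronszajn identity, and a combinatorial recombination of elementary symmetric polynomials, arriving at the intermediate Theorem~\ref{1406}. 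Your generating-function computation of $s_{k-1}(R_1,\dots,R_{d-1})$ from $\det(I+\lambda D^2h)$ via the rank-one determinant lemma is a substantially shorter path to the same integrand: your pointwise expression $h^{2-k}s_{k-1}(a_1^2,\dots,a_d^2)-h^{-k}\sum_j a_j^4u_j^2\,s_{k-2}^{(j)}$ coincides with the paper's $\sum_i a_i^2 s_{k-1}^{(i)}u_i^2h^{-k}$ after substituting $h^2=\sum_i a_i^2u_i^2$ and $s_{k-1}(a_1^2,\dots,a_d^2)=s_{k-1}^{(i)}+a_i^2 s_{k-2}^{(i)}$, so it is a genuine simplification of the paper's longest computation.

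The one real weakness is where you place the recombination. Because you linearise the two pieces separately, the first piece produces a $t^{k-3}$ integral that diverges at the origin for $k\le 2$, forcing the integration by parts and the case split you acknowledge but do not carry out. This is avoidable: recombine on the sphere rather than after the $t$-integration, i.e.\ replace $h^{2-k}s_{k-1}(a_1^2,\dots,a_d^2)$ by $h^{-k}\sum_i a_i^2u_i^2\,s_{k-1}(a_1^2,\dots,a_d^2)$ before linearising. Then every term is of the form $\int_{\s^{d-1}}u_i^2 h^{-k}\,\sigma(\dd\bu)$, a single application of the linearisation with $\alpha=2$, $\beta=k$ covers all $k\in\{1,\dots,d\}$ uniformly, and the cancellation $a_i^2 s_{k-1}(a_1^2,\dots,a_d^2)-a_i^4 s_{k-2}^{(i)}=a_i^2 s_{k-1}^{(i)}$ happens at the level of the integrand, with no boundary terms and no constant chase through $\Gamma(\tfrac k2)=\tfrac{k-2}{2}\Gamma(\tfrac{k-2}{2})$. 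With that modification your argument is complete and arguably cleaner than the paper's.
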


Using a substitute of variables $t\to \sqrt t$, the integrals in the above formula can be expressed in terms of the hypergeometric R-function (cf. \cite[6.8-8, p. 184]{Carlson77}) 
$$
R_{-s}(\bb,\bz)=\frac{1}{B\left(s,\sum_{j=1}^d b_j - s\right)}  
\int_0^{\infty}{t^{s-1}  \dd t \over \prod_{j=1}^d(1+z_j t )^{b_j}},
$$
where $B(\cdot,\cdot)$ is the usual Beta function and
\begin{align*}
    \bb=(b_1,\ldots,b_d)\in \R^d,\quad
    \bz=(z_1,\ldots,z_d)\in \left( \C \setminus (-\infty, 0]\right)^d, \quad
    0<s<\sum_{j=1}^d b_j.
\end{align*}
\begin{corollary}
For every $k\in\{1,\ldots,d\}$ we have
$$
V_k({\mathcal E})= \frac{\kappa_k}{2}  B\left({ d+2-k \over 2}, {k\over 2}  \right) \sum_{i=1}^da_i^2s_{k-1}(a_1^2,\dots,a_{i-1}^2,a_{i+1}^2,\dots,a_d^2)    R_{-k/2} \left(\be_i+\frac12\sum_{j=1}^d\be_j,\ba\right),
$$
where $\ba=(a_1^2,\ldots,a_d^2)$ and $\be_1,\dots,\be_d$ is the standard orthonormal basis in $\R^d$.

\end{corollary}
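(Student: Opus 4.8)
The plan is to reduce the elliptic integral appearing in Theorem~\ref{0948} to the defining integral of the R-function via the change of variables already flagged in the remark, and then simply to read off the parameters. First I would fix $i\in\{1,\dots,d\}$ and apply the substitution $u=t^2$ to the $i$-th integral
$$
I_i:=\int_0^{\infty}{t^{k-1}\over(a_i^2t^2+1)\prod_{j=1}^d\sqrt{a_j^2t^2+1}}\dd t,
$$
for which $\dd t=\tfrac12 u^{-1/2}\,\dd u$, so that
$$
I_i=\frac12\int_0^{\infty}{u^{k/2-1}\over(a_i^2u+1)\prod_{j=1}^d(a_j^2u+1)^{1/2}}\,\dd u.
$$

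Next I would collect the exponents of the linear factors $1+a_j^2 u$. The factor with index $i$ occurs both in the explicit prefactor and in the product, so its total exponent is $1+\tfrac12=\tfrac32$, whereas every factor with $j\ne i$ carries exponent $\tfrac12$. Setting $\bz=\ba=(a_1^2,\dots,a_d^2)$ and $\bb=\be_i+\tfrac12\sum_{j=1}^d\be_j$ (so that $b_i=\tfrac32$ and $b_j=\tfrac12$ for $j\ne i$), the integrand is exactly $u^{s-1}\prod_{j=1}^d(1+z_j u)^{-b_j}$ with $s=k/2$. I would then verify the admissibility conditions for $R_{-s}$: each $z_j=a_j^2>0$ lies in $\C\setminus(-\infty,0]$, and since $\sum_{j=1}^d b_j=\tfrac32+\tfrac{d-1}{2}=\tfrac{d+2}{2}$, the constraint $0<s<\sum_j b_j$ becomes $0<k/2<(d+2)/2$, which holds for all $1\le k\le d$.

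With these identifications the definition of $R_{-s}$ gives
$$
I_i=\frac12\,B\!\left(\frac{k}{2},\frac{d+2-k}{2}\right)R_{-k/2}\!\left(\be_i+\frac12\sum_{j=1}^d\be_j,\ba\right),
$$
where the second Beta argument is $\sum_j b_j-s=(d+2)/2-k/2=(d+2-k)/2$. Substituting this back into Theorem~\ref{0948} and using the symmetry $B(x,y)=B(y,x)$ to factor the $i$-independent Beta term out of the sum yields the claimed expression. Since the entire argument is one change of variables followed by parameter matching, I do not expect a genuine obstacle; the only point requiring care is the exponent bookkeeping — the $i$-th factor acquiring exponent $\tfrac32$ while all others keep $\tfrac12$ — which is precisely what the shift vector $\be_i+\tfrac12\sum_{j=1}^d\be_j$ encodes.
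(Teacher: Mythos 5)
Your proof is correct and follows exactly the route the paper indicates: the substitution $u=t^2$ (equivalently $t\to\sqrt{t}$ as flagged in the remark preceding the corollary), followed by matching the exponents against the defining integral of $R_{-s}$, with the $i$-th factor acquiring total exponent $\tfrac{3}{2}$. The parameter bookkeeping, the admissibility check $0<k/2<(d+2)/2$, and the use of the symmetry of the Beta function are all accurate, so nothing is missing.
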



To obtain Theorem \ref{0948}, we derived an auxiliary formula expressing $V_k(\mathcal E)$ in terms of the integrals over the unit sphere which might be of independent interest.
\begin{theorem}\label{1406}
For every $k\in\{1,\ldots,d\}$ we have
\begin{equation}\label{eq:IntrVolE_main}
V_k({\mathcal E})={1\over k\kappa_{d-k}}\sum_{i=1}^da_i^2s_{k-1}(a_1^2,\dots,a_{i-1}^2,a_{i+1}^2,\dots,a_d^2)\int\limits_{\s^{d-1}}{u_i^2\over h^{k}_{\mathcal E}(\bu)}\,\sigma(\dd\bu),
\end{equation}
where $\bu=(u_1,\dots,u_d)$ and $h_{\mathcal E}(\bu)=\sqrt{a^2_1u^2_1+\dots+a^2_du^2_d}$ is the support function of $\mathcal E$ (see Section~\ref{sect:Basic}).
\end{theorem}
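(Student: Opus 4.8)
The plan is to bypass the $k$-fold spherical integral \eqref{1113b} entirely and instead compute $V_k(\mathcal E)$ through the $k$-th surface area measure of the (smooth, strictly convex) ellipsoid. Recall that for a convex body with a $C^2$ support function one has the representation $V_k(K)=\frac{1}{(d-k)\kappa_{d-k}}\int_{\s^{d-1}}s_k\big(R_1(\bu),\dots,R_{d-1}(\bu)\big)\,\sigma(\dd\bu)$ for $1\le k\le d-1$, where $R_1(\bu),\dots,R_{d-1}(\bu)$ are the principal radii of curvature at the boundary point with outer normal $\bu$; the constant is pinned down by testing $K=\B$. Thus everything reduces to a single integral over $\s^{d-1}$, and the whole task becomes the explicit evaluation of $s_k(R_1,\dots,R_{d-1})$ for the ellipsoid, followed by an identity that recasts it in the stated form. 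The case $k=d$ is treated separately: there $V_d(\mathcal E)=|\mathcal E|_d=\kappa_d a_1\cdots a_d$, and one checks directly that the claimed formula reduces to the radial-volume identity $\int_{\s^{d-1}}h_{\mathcal E}^{-d}(\bu)\,\sigma(\dd\bu)=d\kappa_d/(a_1\cdots a_d)$ for the polar ellipsoid $A^{-1}\B$.

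For the curvature computation I would use that the matrix of principal radii of curvature is the Hessian of the $1$-homogeneous extension of $h_{\mathcal E}=\sqrt{\langle A^2\bu,\bu\rangle}$ restricted to $\bu^\perp$, namely $D^2h_{\mathcal E}(\bu)=h_{\mathcal E}^{-1}A^2-h_{\mathcal E}^{-3}(A^2\bu)(A^2\bu)^{\!\top}$, a rank-one perturbation of the diagonal matrix $h_{\mathcal E}^{-1}A^2$ which annihilates $\bu$. Since $s_k(R_1,\dots,R_{d-1})$ is the coefficient of $t^{d-k}$ in $\det\big(tI_d+D^2h_{\mathcal E}(\bu)\big)$, the matrix determinant lemma applied to this rank-one update of a diagonal matrix yields, after reading off the coefficient, $s_k(R_1,\dots,R_{d-1})=h_{\mathcal E}^{-k}\,s_k(a_1^2,\dots,a_d^2)-h_{\mathcal E}^{-(k+2)}\sum_{j=1}^d a_j^4u_j^2\,s_{k-1}(a_1^2,\dots,\widehat{a_j^2},\dots,a_d^2)$.

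The resulting integrand still carries the wrong powers of $h_{\mathcal E}$ and a spurious factor $a_j^4$, so the crux is an integration by parts on the sphere that removes them. Applying the spherical divergence theorem $\int_{\s^{d-1}}\operatorname{div}_{\s^{d-1}}\!\big(f(\bu)(\be_j-u_j\bu)\big)\,\sigma(\dd\bu)=0$ to the choice $f=u_j/h_{\mathcal E}^k$, and using $\langle\bu,\nabla h_{\mathcal E}\rangle=h_{\mathcal E}$ together with $\nabla(h_{\mathcal E}^{-k})=-k\,h_{\mathcal E}^{-k-2}A^2\bu$, gives for each $j$ the identity $\int_{\s^{d-1}}\big(a_j^2h_{\mathcal E}^{-k}-k\,a_j^4u_j^2h_{\mathcal E}^{-(k+2)}-(d-k)\,a_j^2u_j^2h_{\mathcal E}^{-k}\big)\,\sigma(\dd\bu)=0$. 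Multiplying by $s_{k-1}(a_1^2,\dots,\widehat{a_j^2},\dots,a_d^2)$, summing over $j$, and invoking $\sum_j a_j^2 s_{k-1}(\dots,\widehat{a_j^2},\dots)=k\,s_k(a_1^2,\dots,a_d^2)$ turns the curvature integral $\frac{1}{d-k}\int_{\s^{d-1}}s_k(R_1,\dots,R_{d-1})\,\sigma(\dd\bu)$ precisely into $\frac1k\sum_{i}a_i^2 s_{k-1}(\dots,\widehat{a_i^2},\dots)\int_{\s^{d-1}}u_i^2 h_{\mathcal E}^{-k}\,\sigma(\dd\bu)$, which is the assertion after inserting the prefactor $1/\kappa_{d-k}$. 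I expect the spherical integration-by-parts step to be the main obstacle, both in selecting the field $f(\be_j-u_j\bu)$ that produces exactly the three terms above and in keeping the combinatorial constants $(d-k)$ versus $k$ aligned; once this identity is in hand, the remainder is the bookkeeping verified on $K=\B$.
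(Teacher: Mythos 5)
Your proposal is correct, and it takes a genuinely different route from the paper. The paper starts from the Leichtwei{\ss} representation of mixed volumes via mixed discriminants of the $(d-1)\times(d-1)$ principal minors $H^i_{\mathcal E}(\bu)$, and the bulk of its proof is a block-determinant evaluation of $D_{d-1}\big(A^i(\bu)[k-1],H^i_{\B}(\bu)[d-k]\big)$ using the Weinstein--Aronszajn identity, followed by a resummation over permutations, over subsets $I$, and over $i$ via the recursion $s_{k-1}(\ba^i)=s_{k-1}(\ba^{i,j})+a_j^2s_{k-2}(\ba^{i,j})$. You instead use the curvature-function form $V_k(K)=\frac{1}{(d-k)\kappa_{d-k}}\int_{\s^{d-1}}s_k(R_1,\dots,R_{d-1})\,\sigma(\dd\bu)$ for $k\le d-1$ (your normalization checks out against $K=\B$), evaluate $s_k$ of the reverse Weingarten map by the matrix determinant lemma applied to the rank-one update $h_{\mathcal E}^{-1}A^2-h_{\mathcal E}^{-3}(A^2\bu)(A^2\bu)^{\top}$, and then trade the unwanted terms for the desired ones by a spherical integration by parts. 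The step you flagged as the main risk does go through: with $f=u_jh_{\mathcal E}^{-k}$ one computes, using $\operatorname{div}_{\s^{d-1}}(\be_j-u_j\bu)=-(d-1)u_j$ and Euler's relation for the degree-$(1-k)$ homogeneous extension of $f$,
\begin{equation*}
\operatorname{div}_{\s^{d-1}}\big(f(\bu)(\be_j-u_j\bu)\big)=h_{\mathcal E}^{-k}-k\,a_j^2u_j^2h_{\mathcal E}^{-k-2}-(d-k)\,u_j^2h_{\mathcal E}^{-k},
\end{equation*}
which integrates to zero and, after multiplying by $a_j^2s_{k-1}(\ba^j)$, summing over $j$, and using $\sum_j a_j^2s_{k-1}(\ba^j)=k\,s_k(\ba)$, yields exactly \eqref{eq:IntrVolE_main}; the case $k=d$ is the polar-volume identity as you say. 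What your approach buys is the elimination of the mixed-discriminant machinery and the permutation/minor bookkeeping, at the modest cost of invoking the curvature representation (hence splitting off $k=d$) and one divergence-theorem computation; the combinatorics collapses to the single identity $\sum_j a_j^2s_{k-1}(\ba^j)=k\,s_k(\ba)$.
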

{
Notice that the formulae from Theorem \ref{0948} and Theorem \ref{1406} are valid for $k=d$ as well. In this case we have $V_d({\mathcal E})=\kappa_da_1\cdot\ldots\cdot a_d$, which means that some of our formulae can presumably be further simplified} {for some $k$, see Section~\ref{sect:Examples}.}

Theorem~\ref{0948} readily follows from Theorem~\ref{1406} and the following proposition applied with $\alpha=2,\beta=k$. The idea of its proof is taken from~\cite[Lemma~2]{myroshnychenko2020unique}.
\begin{proposition}\label{0831}
For $i\in\{1,\dots, d\}$ and $\alpha, \beta\in\R^1$ such that $\beta>0$, $\alpha>d-\beta$ we have
\begin{align}\label{1112}
    \int\limits_{\s^{d-1}}{|u_i|^\alpha \over h^{\beta}_{\mathcal E}(\bu)}\,\sigma(\dd\bu)
    =
     {4\pi^{(d-1)/2} \Gamma(\frac{\alpha+1}{2})\over \Gamma({d+\alpha-\beta\over 2})\Gamma({\beta\over 2})}\int_0^{\infty}{t^{\beta-1}\over(a_i^2t^2+1)^{\alpha/2}\prod_{j=1}^d\sqrt{a_j^2t^2+1}}\dd t.
\end{align}
\end{proposition}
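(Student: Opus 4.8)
The plan is to introduce the auxiliary Gaussian integral
\[
I:=\int_{\R^d}\frac{|x_i|^\alpha}{h^{\beta}_{\mathcal E}(\bx)}\,e^{-\|\bx\|^2/2}\,\dd\bx,
\]
and to evaluate it in two independent ways; comparing the two expressions will yield \eqref{1112}. This mirrors the device used in \cite[Lemma~2]{myroshnychenko2020unique}: the Gaussian density is the bridge between integration over $\s^{d-1}$ and an integral that separates in the coordinate directions.

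For the first evaluation I would pass to polar coordinates $\bx=r\bu$, $r=\|\bx\|$, $\bu\in\s^{d-1}$. The integrand $|x_i|^\alpha/h^{\beta}_{\mathcal E}(\bx)$ is positively homogeneous of degree $\alpha-\beta$, so it factors as $r^{\alpha-\beta}\,|u_i|^\alpha/h^{\beta}_{\mathcal E}(\bu)$, and the radial and spherical parts decouple:
\[
I=\Bigl(\int_0^\infty r^{d+\alpha-\beta-1}e^{-r^2/2}\,\dd r\Bigr)\int_{\s^{d-1}}\frac{|u_i|^\alpha}{h^{\beta}_{\mathcal E}(\bu)}\,\sigma(\dd\bu).
\]
The substitution $r^2/2\mapsto w$ turns the radial factor into $2^{(d+\alpha-\beta)/2-1}\Gamma\!\bigl(\tfrac{d+\alpha-\beta}{2}\bigr)$, which is finite precisely because $\alpha>d-\beta$. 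This already isolates the spherical integral we want, up to the still-unknown value of $I$.

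For the second evaluation I would represent the negative power of the quadratic form by a Gamma (Laplace) integral,
\[
h^{-\beta}_{\mathcal E}(\bx)=\bigl(h^2_{\mathcal E}(\bx)\bigr)^{-\beta/2}=\frac{1}{\Gamma(\beta/2)}\int_0^\infty s^{\beta/2-1}e^{-s\,h^2_{\mathcal E}(\bx)}\,\dd s,
\]
valid for $\beta>0$, and then exchange the order of integration by Fubini. The resulting $\bx$-integral is Gaussian with a diagonal quadratic form and hence factors across coordinates: each factor with $j\neq i$ is $\int_\R e^{-(sa_j^2+1/2)x_j^2}\dd x_j=\sqrt{\pi/(sa_j^2+1/2)}$, producing the power $\pi^{(d-1)/2}$, while the $i$-th factor $\int_\R|x_i|^\alpha e^{-(sa_i^2+1/2)x_i^2}\dd x_i$ evaluates to $\Gamma\!\bigl(\tfrac{\alpha+1}{2}\bigr)(sa_i^2+1/2)^{-(\alpha+1)/2}$. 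Collecting these and peeling off one half-power from the $i$-th factor gives $I$ as a constant times $\int_0^\infty s^{\beta/2-1}(sa_i^2+1/2)^{-\alpha/2}\prod_{j=1}^d(sa_j^2+1/2)^{-1/2}\,\dd s$; the change of variables $s=t^2/2$ (so that $sa_j^2+1/2=\tfrac12(a_j^2t^2+1)$) converts this into exactly the $t$-integral appearing on the right of \eqref{1112}, at the cost of an explicit power of $2$.

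Equating the two expressions for $I$ and cancelling the common factor $\int_{\s^{d-1}}|u_i|^\alpha h^{-\beta}_{\mathcal E}(\bu)\,\sigma(\dd\bu)$ against the radial constant from the first evaluation produces \eqref{1112}; the leftover powers of $2$ combine to the clean constant $4$. I expect the genuinely delicate points to be, first, the justification of Fubini and of the convergence of both the Laplace representation and the radial integral, which is exactly where the hypotheses $\beta>0$ and $\alpha>d-\beta$ (together with $\alpha>-1$ needed for the finite Gaussian moment $\Gamma(\tfrac{\alpha+1}{2})$) are consumed; and second, the careful bookkeeping of the $\Gamma$-factors and the powers of $2$ under the substitution $s=t^2/2$, which is routine but must be tracked precisely for the constant to come out as stated.
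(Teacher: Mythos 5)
Your proposal is correct and follows essentially the same route as the paper: both evaluate the Gaussian integral $\int_{\R^d}|x_i|^\alpha h_{\mathcal E}^{-\beta}(\bx)e^{-c\|\bx\|^2}\,\dd\bx$ once by polar coordinates and once via the Gamma-integral representation of $h_{\mathcal E}^{-\beta}$ followed by Fubini and coordinatewise factorization (the paper uses $c=1$ and the representation directly in the variable $t$ with $e^{-h_{\mathcal E}^2(\bx)t^2}$, which just absorbs your substitution $s=t^2/2$). Your bookkeeping of the $\Gamma$-factors and powers of $2$ checks out and yields the stated constant $4$.
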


The paper is organized as follows: 
 In Section \ref{sect:Examples},  examples of the  intrinsic volumes $V_k({\mathcal E})$ of low ($k=1,2$) and high ($k=d-1,d-2$) orders  are given.  {In Section \ref{sect:ExpVol}, our  Theorem \ref{0948} is applied to get a more explicit form of the  expected $k$--dimensional volume of convex hulls of $k+1$ {idependent} random points uniformly distributed in an ellipsoid or having {an arbitrary centered} Gaussian distribution law.  } Section~\ref{sect:Basic}
contains some  preliminaries from convex and differential geometry which are used in the proofs of our results located in
Section  \ref{1122}.


\section{Examples}\label{sect:Examples}

It is well known that $V_0({\mathcal E})=1$, $V_d({\mathcal E})=\kappa_d a_1\cdot\ldots\cdot a_d.$ From~\eqref{eq:IntrVolE_main} for $k=1$ we get
\begin{align}\label{1058}
    V_1(\mathcal E)={1\over \kappa_{d-1}}\sum_{i=1}^da_i^2\int\limits_{\s^{d-1}}{u_i^2\over h_{\mathcal E}(\bu)}\,\sigma(\dd\bu)={1\over \kappa_{d-1}}\int\limits_{\s^{d-1}}\sqrt{\sum_{i=1}^d {a_i^2}{u_i^2}}\,\sigma(\dd\bu),
\end{align}
which agrees with~\eqref{1113b} and with the Kubota formula for the intrinsic volumes: in case $k=1$
it states that 
\begin{align*}
    V_1(K)={1\over \kappa_{d-1}}\int\limits_{\s^{d-1}}{ h_{K}(\bu)}\,\sigma(\dd\bu)
\end{align*}
for any convex body $K$.
On the other hand, from Theorem~\ref{0948} we obtain
\begin{align*}
    V_1({\mathcal E})=2\sum_{i=1}^d\int_0^{\infty}{a_i^2\over(a_i^2t^2+1)\prod_{j=1}^d\sqrt{a_j^2t^2+1}}\dd t.
\end{align*}

Taking $k=2$ in~\eqref{eq:IntrVolE_main} gives
\begin{align*}
    V_2({\mathcal E})&={1\over 2\kappa_{d-2}}\sum_{i=1}^d\bigg[a_i^2\bigg(\sum_{j=1}^da_j^2-a_i^2\bigg)\int\limits_{\s^{d-1}}{u_i^2\over h^{2}_{\mathcal E}(\bu)}\,\sigma(\dd\bu)\bigg]
    \\
    &=\pi\sum\limits_{i=1}^d a_i^2-\frac{\pi}{\sigma(\s^{d-1}) }  
\int\limits_{\s^{d-1}}\frac{\sum\limits_{i=1}^d a_i^4u_i^2}{\sum\limits_{i=1}^d a_i^2u_i^2}  \sigma(\dd\bu),
\end{align*}
where we used that $\sigma(\s^{d-1})=2\pi\kappa_{d-2}$. Applying Proposition~\ref{0831} with $\alpha=\beta=2$ leads to
\begin{align*}
     V_2({\mathcal E})=\pi\sum\limits_{i=1}^d a_i^2-\pi\sum_{i=1}^d\int_0^{\infty}{a_i^4t\over(a_i^2t^2+1)\prod_{j=1}^d\sqrt{a_j^2t^2+1}}\dd t.
\end{align*}

Now let us use the following  duality relation for ellipsoids which can be found in \cite[Prop. 4.8]{KZ16}:
\begin{equation}\label{eq:Dual}
V_k({\mathcal E})=\frac{\kappa_k}{\kappa_d \kappa_{d-k}  }
V_d({\mathcal E})V_{d-k}({\mathcal E}^o),
\quad k=0,\ldots,d,
\end{equation}
where ${\mathcal E}^o$ is the ellipsoid dual to ${\mathcal E}$:
\begin{align*}
    {\mathcal E}^o=\{ x\in\R^d:\langle x,y\rangle \le 1, \quad y\in {\mathcal E}\}.
\end{align*}
Using this relation and the fact that $\mathcal E^o$ has semiaxes $a_1^{-1},\dots,a_d^{-1}$ we can easily derive the formulae {for $V_{d-1}({\mathcal E})$ and $V_{d-2}({\mathcal E})$ from the formulae for $V_{1}({\mathcal E}^o)$ and $V_{2}({\mathcal E}^o)$, respectively}:
\begin{align*}
    V_{d-1}(\mathcal E)&={a_1\dots a_d\over 2}\int\limits_{\s^{d-1}}\sqrt{\sum_{i=1}^d {a_i^{-2}}{u_i^2}}\,\sigma(\dd\bu)
    \\
    &=\kappa_{d-1}a_1^2\dots a_d^2\sum_{i=1}^d\int_0^{\infty}{\dd t \over(t^2+a_i^2)\prod_{j=1}^d\sqrt{t^2+a_j^2}}
\end{align*}
and
\begin{align*}
    V_{d-2}({\mathcal E})&=\kappa_{d-2}a_1\dots a_d\sum\limits_{i=1}^d a_i^{-2}-\frac{a_1\dots a_d}{2\pi}  
    \int\limits_{\s^{d-1}}\frac{\sum\limits_{i=1}^d a_i^{-4}u_i^{2}}{\sum\limits_{i=1}^d a_i^{-2}u_i^2}  \sigma(\dd\bu)
    \\
    &=\kappa_{d-2}a_1\dots a_d\sum\limits_{i=1}^d a_i^{-2}-
    \kappa_{d-2}a_1^2\dots a_d^2
    \sum_{i=1}^d\int_0^{\infty}{a_i^{-2}t\over(t^2+a_i^2)\prod_{j=1}^d
    \sqrt{t^2+a_j^2}}\dd t.
\end{align*}

\section{Expected volumes of random simplices}\label{sect:ExpVol}

The intrinsic volumes of ellipsoids have a remarkable connection to the average volume of random $k$-simplex, which is formed as a convex hull of $k+1$ isotropic random points. More precisely, let $X_0,\ldots, X_k$, $1\leq k\leq d$, be random points in $\R^d$, whose joint distribution is invariant with respect to rotations.  
We consider their convex hull $\conv(X_0,\ldots, X_k)$, which is a random (possibly degenerate) simplex, and its $k$-dimensional volume $|\conv(X_0,\ldots, X_k)|_k$, which is a well-defined random variable. In \cite[Corollary 1.1]{GGZ}, it was shown that for any non-degenerate matrix $A\in\R^{d\times d}$ we have
\begin{equation}\label{eq_simpl_ell}
\E |\conv(AX_0,\ldots, AX_k)|_k={\kappa_{d-k}\over {d\choose k}\kappa_d}V_k(\mathcal{E}_A)\cdot\E|\conv(X_0,\ldots, X_k)|_k,
\end{equation}
where $\mathcal{E}_A:=\{\bx\in\R^d\colon \bx^{\top}(A^\top A)^{-1}\bx\le 1\}$ is an ellipsoid.

There are two particularly interesting models which formula \eqref{eq_simpl_ell} can be applied to. For the first model consider a bit more general setup. Let $K\subset \R^d$ be a $d$-dimensional convex body, and let $Y_0,\ldots, Y_k$, $1\leq k\leq d$, be independent random points uniformly distributed over $K$. The classical problem to evaluate $M_k(K):=\E |\conv(Y_0,\ldots, Y_k)|_k$ for given $K$  goes back to Klee \cite{Klee69}. By now, not so many exact formulae for $M_k(K)$ have been  obtained, and those mostly for $d=2$ and $d=3$. In dimension $2$, the exact formulae for $M_2(K)$ are available for triangles \cite{Reed74}, regular planar $n$-gones \cite{Buch84} and   parallelograms \cite{Reed74}. In dimension $3$, the formulae for $M_3(K)$ have been derived in case of tetrahedron \cite{Mann94} and cube \cite{Zin03}. In arbitrary dimension $d$, $M_d(K)$ is known only for the ball \cite{Kin96} and, hence, due to affine invariance for any ellipsoid. The situation is more complicated if $k<d$ since $M_k(K)$ is not affine invariant anymore. For any $d$ and any $1\leq k\leq d$, the exact formula for $M_k(K)$ is known only in case of a ball \cite{Miles71} (see also \cite[Theorem 8.2.3]{SW08}). For the functional $M_1(K)$ describing the average distance between two points chosen uniformly in $K$, some formulae are known in planar case \cite{bG51, ZP, Baesel21, tS85}. A formula for the cube \cite{BBC07} (via the so-called {\it box integral}) is also available in dimension $d=3$. However, the box integral does not have a closed form expression  for $d\ge 4$. Heinrich \cite{Hein14} has also obtained a representation of $M_1(\mathcal{E})$ for a $d$-dimensional ellipsoid $\mathcal{E}$ with semi-axes $0<a_1\leq \ldots \leq a_d$ in terms of the elliptic integral \eqref{1058}. Applying Theorem \ref{0948} to relation \eqref{eq_simpl_ell}, we are able to obtain more {explicit} formulae for $M_k(\mathcal{E})$ and all $1\leq k\leq d$:

\begin{theorem}
Let $Y_0,\ldots, Y_d$ be independent random points uniformly distributed in $\mathcal{E}$. Then for any $k\in \{1,\ldots, d\}$, the expected volume $M_k(\mathcal{E}):=\E|\conv(Y_0,\ldots, Y_k)|_k$ equals
\begin{align*}
M_k(\mathcal{E})&={1\over{ 2^k \Gamma({k\over 2}+1)}}{\Gamma\big({(d+1)(k+1)\over 2}+1\big)\over \Gamma \big({(d+1)(k+1)\over 2}+{1\over 2}\big)}\Big({\Gamma({d\over 2}+1)\over \Gamma({d+1\over 2}+1)}\Big)^{k+1}\sum_{i=1}^da_i^2s_{k-1}(a_1^2,\dots,a_{i-1}^2,a_{i+1}^2,\dots,a_d^2)\\
&\qquad\qquad\times\int_0^{\infty}{t^{k-1}\over(a_i^2t^2+1)\prod_{j=1}^d\sqrt{a_j^2t^2+1}}\dd t.
\end{align*}
\end{theorem}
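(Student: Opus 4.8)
The plan is to combine the affine transfer formula \eqref{eq_simpl_ell} with the known closed form for the expected volume of a random simplex inscribed in a ball, and then substitute our explicit expression for $V_k(\mathcal E)$ from Theorem~\ref{0948}. The key observation is that if we take $X_0,\ldots,X_k$ to be independent and uniformly distributed in the unit ball $\B$, then their joint distribution is rotation invariant, so \eqref{eq_simpl_ell} applies. Choosing $A=\diag(a_1,\ldots,a_d)$ makes $\mathcal E_A$ precisely the ellipsoid $\mathcal E$ with semiaxes $a_1,\ldots,a_d$, and the points $AX_0,\ldots,AX_k$ are then uniformly distributed in $\mathcal E$. Hence the left-hand side of \eqref{eq_simpl_ell} is exactly $M_k(\mathcal E)$, and the factor $\E|\conv(X_0,\ldots,X_k)|_k$ on the right is the expected volume of a random $k$-simplex inscribed in the $d$-dimensional unit ball.

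First I would invoke the classical Miles formula \cite{Miles71} (see also \cite[Theorem~8.2.3]{SW08}) for the latter quantity. For $k+1$ independent uniform points in $\B$, the expected $k$-volume of their convex hull is a ratio of Gamma functions; writing it out explicitly gives an expression of the schematic form
\begin{align*}
\E|\conv(X_0,\ldots,X_k)|_k
={1\over k!}\Big({\kappa_d\over \cdots}\Big)\cdot\frac{\Gamma\text{-factors}}{\Gamma\text{-factors}},
\end{align*}
where the numerator and denominator are products of Gamma values in $d$ and $k$. Substituting this together with Theorem~\ref{0948} into \eqref{eq_simpl_ell}, the combinatorial prefactor becomes
\begin{align*}
{\kappa_{d-k}\over {d\choose k}\kappa_d}\cdot\kappa_k\cdot\E|\conv(X_0,\ldots,X_k)|_k,
\end{align*}
multiplying the same sum of elliptic integrals that appears in Theorem~\ref{0948}. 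Since the integral factor is already in the desired final form, the entire content of the proof reduces to verifying that this scalar prefactor simplifies to the stated product of Gamma ratios and the power $2^{-k}$.

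The main obstacle, and really the only nontrivial work, will be this Gamma-function bookkeeping: one must expand $\kappa_{d-k}$, $\kappa_d$, $\kappa_k$, the binomial ${d\choose k}$, and Miles' ratio all in terms of $\Gamma$ and $\pi$, and then cancel the powers of $\pi$ and collapse the many $\Gamma$ factors into the compact form
\begin{align*}
{1\over 2^k\,\Gamma\!\left({k\over2}+1\right)}\cdot
\frac{\Gamma\!\left({(d+1)(k+1)\over2}+1\right)}{\Gamma\!\left({(d+1)(k+1)\over2}+{1\over2}\right)}\cdot
\left(\frac{\Gamma\!\left({d\over2}+1\right)}{\Gamma\!\left({d+1\over2}+1\right)}\right)^{k+1}.
\end{align*}
I expect the powers of $\pi$ to cancel exactly, confirming the formula is dimensionally consistent, and the grouping of $\Gamma$-factors into the $(k+1)$-th power to be the step demanding the most care. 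To carry this out cleanly I would use the Legendre duplication formula $\Gamma(z)\Gamma(z+\tfrac12)=2^{1-2z}\sqrt\pi\,\Gamma(2z)$ to reconcile the half-integer arguments coming from Miles' formula with those coming from the $\kappa$'s. Once these identities are applied, the claimed expression follows by direct algebraic simplification, completing the proof.
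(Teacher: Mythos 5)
Your proposal follows exactly the paper's own route: apply \eqref{eq_simpl_ell} with $X_0,\ldots,X_k$ uniform in $\B$ and $A=\diag(a_1,\ldots,a_d)$, insert the Miles/Schneider--Weil formula for $\E|\conv(X_0,\ldots,X_k)|_k$ in the ball together with Theorem~\ref{0948}, and simplify the resulting Gamma-function prefactor via the Legendre duplication formula. The plan is correct and matches the paper's proof in both structure and level of detail; only the final arithmetic verification of the constant remains to be written out.
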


\begin{proof}
The result follows directly from \eqref{eq_simpl_ell} applied to $X_0,\ldots, X_k$ distributed uniformly inside the unit ball $\mathbb{B}^d$ and an affine transformation mapping $\mathbb{B}^d$ to $\mathcal{E}$. Substituting the exact values for $\E|\conv(X_0,\ldots, X_k)|_k$ from \cite[Theorem 8.2.3]{SW08} (see also \cite[Corollary 1.5]{GGZ}) and the formula for $V_k(\mathcal{E})$ from Theorem \ref{0948} finishes the proof. It should be noted that in order to simplify the constant we have used the Legendre duplication formula $\Gamma(z)\Gamma(z+1/2)=2^{1-2z}\sqrt{\pi}\Gamma(2z)$.
\end{proof}

The second model is the so-called Gaussian random simplex. Let $X_0,\ldots, X_k$, $1\leq k\leq d,$ be independent standard Gaussian random vectors in $\R^d$. Their convex hull $\conv(X_0,\ldots, X_k)$ is almost surely a $k$-simplex. Its expected $k$-dimensional volume was calculated by Miles \cite[Equation (70)]{Miles71}. Using \eqref{eq_simpl_ell} the later result can be generalized to the convex hull of non-standard Gaussian random vectors.

\begin{theorem}
Let $Y_0,\ldots, Y_d$ be independent Gaussian centered random vectors in $\R^d$ with non--degenerate covariance matrix $\Sigma$. Let $\lambda_1,\ldots, \lambda_d>0$ be the eigenvalues of $\Sigma$. For any $k\in \{1,\ldots, d\}$ we have
\begin{align*}
\E|\conv(Y_0,\ldots, Y_k)|_k&={\sqrt{k+1}\over\Gamma({k\over 2}+1)2^{k/2}}\sum_{i=1}^d\lambda_is_{k-1}(\lambda_1,\dots,\lambda_{i-1},\lambda_{i+1},\dots,\lambda_d)\\
&\qquad\qquad\times\int_0^{\infty}{t^{k-1}\over(\lambda_it^2+1)\prod_{j=1}^d\sqrt{\lambda_jt^2+1}}\dd t.
\end{align*}
\end{theorem}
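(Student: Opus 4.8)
The plan is to reduce the non-standard Gaussian model to the standard one by a linear change of variables and then apply the transformation formula~\eqref{eq_simpl_ell}. Let $A:=\Sigma^{1/2}$ be the symmetric positive definite square root of $\Sigma$, and let $X_0,\dots,X_k$ be independent standard Gaussian vectors in $\R^d$. Since $A$ is symmetric, $AA^\top=A^\top A=\Sigma$, so each $AX_i$ is a centered Gaussian vector with covariance $\Sigma$, whence $(Y_0,\dots,Y_k)\eqdistr(AX_0,\dots,AX_k)$. The joint law of $(X_0,\dots,X_k)$ is rotation invariant, so~\eqref{eq_simpl_ell} applies with this choice of $A$ and yields
\[
\E|\conv(Y_0,\dots,Y_k)|_k=\frac{\kappa_{d-k}}{\binom{d}{k}\kappa_d}\,V_k(\mathcal E_A)\cdot\E|\conv(X_0,\dots,X_k)|_k,
\]
where $\mathcal E_A=\{\bx\in\R^d\colon\bx^\top\Sigma^{-1}\bx\le 1\}$.

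Next I would identify $\mathcal E_A$: its squared semiaxes are the eigenvalues of $A^\top A=\Sigma$, i.e.\ its semiaxes are $\sqrt{\lambda_1},\dots,\sqrt{\lambda_d}$, and since intrinsic volumes are invariant under rotations only these values matter. Substituting $a_i^2=\lambda_i$ into Theorem~\ref{0948} gives
\begin{align*}
V_k(\mathcal E_A)=\kappa_k\sum_{i=1}^d\lambda_is_{k-1}(\lambda_1,\dots,\lambda_{i-1},\lambda_{i+1},\dots,\lambda_d)\int_0^{\infty}{t^{k-1}\over(\lambda_it^2+1)\prod_{j=1}^d\sqrt{\lambda_jt^2+1}}\dd t,
\end{align*}
which already reproduces the sum over $i$ and the elliptic integral of the claimed formula. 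It then remains only to check that the prefactors collapse to $\sqrt{k+1}/(2^{k/2}\Gamma(\frac k2+1))$.

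Finally I would insert the value of $\E|\conv(X_0,\dots,X_k)|_k$ for standard Gaussian points, computed by Miles~\cite[Eq.~(70)]{Miles71}, namely $\frac{2^{k/2}\sqrt{k+1}}{k!}\,\frac{\Gamma(\frac{d+1}2)}{\Gamma(\frac{d-k+1}2)}$, and gather constants. Writing $\kappa_k=\pi^{k/2}/\Gamma(\frac k2+1)$ one finds $\kappa_{d-k}\kappa_k/(\binom dk\kappa_d)=\frac{k!(d-k)!}{d!}\frac{\Gamma(\frac d2+1)}{\Gamma(\frac{d-k}2+1)\Gamma(\frac k2+1)}$; multiplying by Miles' constant the factorials cancel, and the Legendre duplication formula $\Gamma(z)\Gamma(z+\frac12)=2^{1-2z}\sqrt\pi\,\Gamma(2z)$ collapses the ratio $\Gamma(\frac d2+1)\Gamma(\frac{d+1}2)/(\Gamma(\frac{d-k}2+1)\Gamma(\frac{d-k+1}2))$ into $2^{-k}d!/(d-k)!$, leaving precisely $\sqrt{k+1}/(2^{k/2}\Gamma(\frac k2+1))$. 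The conceptual steps — the reduction via $\Sigma^{1/2}$ and the appeals to~\eqref{eq_simpl_ell} and Theorem~\ref{0948} — are short; the only delicate point is this constant bookkeeping, where one must combine the $\kappa$'s, the binomial and Miles' Gamma factors correctly and apply the duplication formula exactly as in the uniform case.
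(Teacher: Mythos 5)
Your proposal is correct and follows essentially the same route as the paper: reduction to standard Gaussians via the symmetric square root $\Sigma^{1/2}$, application of \eqref{eq_simpl_ell} with $\mathcal E_{\Sigma}$ having semiaxes $\sqrt{\lambda_i}$, insertion of Miles' value $\tfrac{2^{k/2}\sqrt{k+1}}{k!}\tfrac{\Gamma(\frac{d+1}{2})}{\Gamma(\frac{d-k+1}{2})}$ and of Theorem~\ref{0948}, and a final cleanup of the constants by the Legendre duplication formula (applied twice). Your constant bookkeeping checks out and matches the paper's intermediate expression exactly.
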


\begin{proof}
Let $X_0,\ldots, X_k$ be independent standard Gaussian random vectors. Consider an affine transformation with matrix $\Sigma^{1/2}$ (which  is well defined since $\Sigma$ is symmetric and positive definite). It is clear that {$\Sigma^{1/2}X_i$}, $0\leq i\leq k$, are independent centered Gaussian random vectors with covariance matrix $\Sigma$. Thus,
{
first applying~\eqref{eq_simpl_ell} and then~\cite[Equation (70)]{Miles71} we have
\begin{align*}
\E&|\conv(Y_0,\ldots, Y_k)|_k=\E|\conv(\Sigma^{1/2}X_0,\ldots, \Sigma^{1/2}X_k)|_k
\\
&={\kappa_{d-k}\over {d\choose k}\kappa_d}V_k(\mathcal{E}_\Sigma)\cdot\E|\conv(X_0,\ldots, X_k)|_k
={\kappa_{d-k}\over {d\choose k}\kappa_d}V_k(\mathcal{E}_\Sigma)\cdot{2^{k/2}\sqrt{k+1}\over k!}{\Gamma({d+1\over 2})\over \Gamma({d-k+1\over 2})},
\end{align*}
}where $\mathcal{E}_\Sigma:=\{\bx\in\R^d\colon \bx^{\top}\Sigma^{-1}\bx\leq 1\}$ {is an ellipsoid}
with semi-axes  $a_i=\sqrt{\lambda_i}>0$, $1\leq i\leq d$. Thus, by Theorem \ref{0948} we conclude that
\begin{align*}
\E|\conv(Y_0,\ldots, Y_k)&|_k={(d-k)!\over d!}{2^{k/2}\sqrt{k+1}\over \Gamma({k\over 2}+1)}{\Gamma({d\over 2}+1)\Gamma({d+1\over 2})\over \Gamma({d-k\over 2}+1)\Gamma({d-k+1\over 2})}\\
&\times\sum_{i=1}^d\lambda_is_{k-1}(\lambda_1,\dots,\lambda_{i-1},\lambda_{i+1},\dots,\lambda_d)\int_0^{\infty}{t^{k-1}\over(\lambda_it^2+1)\prod_{j=1}^d\sqrt{\lambda_jt^2+1}}\dd t.
\end{align*}
Applying the Legendre duplication formula $\Gamma(z)\Gamma(z+1/2)=2^{1-2z}\sqrt{\pi}\Gamma(2z)$ twice finishes the proof.
\end{proof}

\section{Basic facts from convex geometry}\label{sect:Basic}

 The {\it mixed volumes} $V(K_1,\ldots, K_d)$ of convex bodies $K_1,\ldots, K_d \subset \R^d$
 are introduced via the \emph{Minkowski theorem} as the coefficients of the polynomial expansion
\begin{align}\label{1801}
 |r_1 K_1+\ldots+ r_d K_d|_d=\sum\limits_{k_1,\ldots,k_d=1}^d r_{k_1} \ldots r_{k_d} V(K_{k_1},\ldots, K_{k_d}), \quad r_1,\ldots, r_d\ge 0,
\end{align}
which is the generalisation of the Steiner formula, see~\eqref{1800}. They are non-negative and symmetric with respect to all permutations of the indices of $K_1,\ldots, K_d$, cf. e.g. \cite[Theorem 5.1.6]{rS14}. From~\eqref{1800} and~\eqref{1801} we immediately have 
\begin{align}\label{1805}
     V_k(K)=\frac 1{\kappa_{d-k}}{d\choose k} W_{d-k}(K), \quad k=0,\ldots,d,
\end{align}
where
\begin{align}\label{2026}
    W_k(K)= V\Big(\underbrace{K, \ldots, K}_{k}, \underbrace{\B,\ldots,\B}_{d-k} \Big)
\end{align}
are called the \emph{querma{ss}integrals} of $K$, see more on them e.g. in \cite[Chapter 4]{rS14}. 

Let $h_{K}(\bx)=\sup_{\by\in K} \langle \bx,\by \rangle$, $\bx\in\R^d$ be the support function of a convex body $K$.  
For an ellipsoid 
\begin{align}\label{1138}
    {\mathcal E}=\bigg\{ \bx=(x_1,\ldots, x_d)\in\R^d: \sum_{i=1}^d \frac{x_i^2}{a_i^2}\le 1  \bigg\}\quad \text{we have}\quad
h_{{\mathcal E}}(\bx)=\sqrt{\sum_{i=1}^d a_i^2 x_i^2}.
\end{align}

The main ingredient of the proof of Theorem~\ref{1406} is the formula expressing the mixed volume of convex bodies in terms of their support functions. To formulate it, for arbitrary matrices $Q_1,\dots,Q_{d-1}\in\R^{(d-1)\times (d-1)}$ denote by $D(Q_1,\dots,Q_{d-1})$ their \emph{mixed discriminant} defined as (see, e.g.,~\cite{Bapat89})
\begin{align}\label{0950}
    D_{d-1}(Q_1,\dots, Q_{d-1})=\frac{1}{(d-1)!}\sum_{\tau\in S_{d-1}}\det Q(\tau),
\end{align}
where $S_{d-1}$ is the symmetric group on $d-1$ elements, $\tau=(\tau(1),\dots,\tau(d-1))$, and $Q(\tau)$ is a matrix whose $i$-th column coincides with the $i$-th column of $Q_{\tau(i)}$.
Essentially, we compose a matrix using the columns of $Q_1,\dots, Q_{d-1}$ with pairwise different indices according to a random permutation (uniformly chosen from $S_{d-1}$), and then take its expected determinant. Given an arbitrary matrix $Q\in\R^{d\times d}$ denote by $Q^{i}$ its principal minor produced by the deleting the $i$-th row and the $i$-th column from $Q$. 

Now suppose that $K_1,\dots,K_d$ are some convex bodies having $C^2$--smooth boundaries with positive Gaussian curvatures at each point. For a convex body $K\subset\R^d$ denote by  $H_{K}(\bx)\in\R^{d\times d}$ the Hessian matrix of its support function $h_K$ at point $\bx$. 
Then it is known (see e.g. \cite[p. 1061]{leicht93}) that
\begin{align*}
 V\left(K_1, \ldots, K_d\right)=\frac{1}{d}
 \sum_{i=1}^d\int\limits_{\s^{d-1}} h_{K_1}(\bu) D_{d-1}\left(H_{K_2}^{i}(\bu),\dots,H_{K_{d}}^{i}(\bu)\right)
 \,\sigma(\dd\bu).
\end{align*}
Combining this with~\eqref{1805} and~\eqref{2026} for $k\geq 1$ leads to
\begin{align}\label{2038}
 V_k(K)&=\frac{{d\choose k}}{d\kappa_{d-k}}
 \sum_{i=1}^d\int\limits_{\s^{d-1}}h_{K}(\bu) D_{d-1}\Big(\underbrace{H_{K}^{i}(\bu),\dots,H_{K}^{i}(\bu)}_{k-1}, \underbrace{H_{{\B}}^{i}(\bu),\dots,H_{{\B}}^{i}(\bu)}_{d-k} \Big)\,\sigma(\dd\bu).
\end{align}

\section{Proofs}\label{1122}
\subsection{Proof of Theorem~\ref{1406}}
We are going to apply~\eqref{2038} with $K=\mathcal E$. To this end, let us first calculate the mixed discriminant under the integral in the right-hand side of~\eqref{2038}. 
It follows by straightforward double differentiation of the right-hand side of~\eqref{1138} that $$H_{\mathcal E}(\bu)= h_{{\mathcal E}}^{-3}(\bu) A(\bu),$$ where
\begin{align*}
A(\bu)&= 
h_{\mathcal E}^2(\bu) \diag(a_1^2,\dots,a_d^2)-(a_1^2u_1,\ldots,a_{d}^2u_{d})^{\top}(a_1^2u_1,\ldots,a_{d}^2u_{d})
\\[10pt]
&=\left(\begin{array}{cccc}
   a_1^2 (h_{\mathcal E}^2(\bu) - a_1^2 u_1^2) & -a_1^2 a_2^2 u_1 u_2 & \ldots & -a_1^2 a_d^2 u_1 u_d    \\
     - a_2^2 a_1^2u_2 u_1 &  a_2^2 (h_{\mathcal E}^2(\bu) - a_2^2 u_2^2) & \ldots & -a_2^2 a_d^2 u_2 u_d    \\
     \ldots &  \ldots & \ldots & \ldots    \\
       - a_d^2 a_1^2 u_d u_1 & - a_d^2a_{2}^2  u_d u_{2}& \ldots &  a_d^2 (h_{\mathcal E}^2(\bu) - a_d^2 u_d^2 ) \end{array}
\right).
\end{align*}
In particular, recalling that $\bu\in\s^{d-1}$ we have
\begin{align*}
    H_{{\B}}(\bu)&=\|\bu\|^{-3}(I_d-\bu\bu^\top)=I_d-\bu\bu^\top
    \\[10pt]
    &=\left(\begin{array}{cccc}
   1- u_1^2 & - u_1 u_2 & \ldots & - u_1 u_{d}    \\
     - u_1 u_2 &  1-u_2^2 & \ldots & - u_2 u_{d}    \\
     \ldots &  \ldots & \ldots & \ldots    \\
       - u_1 u_{d}  & - u_{2} u_{d} & \ldots &  1- u_{d}^2
\end{array}
\right).
\end{align*}
Using this and the linearity of the mixed discriminants we arrive at
\begin{align}\label{0456}
D_{d-1}\Big(&\underbrace{H_{\mathcal E}^{i}(\bu),\dots,H_{\mathcal E}^{i}(\bu)}_{k-1}, \underbrace{H_{{\B}}^{i}(\bu),\dots,H_{{\B}}^{i}(\bu)}_{d-k} \Big)
\\\notag
 &=h_{\mathcal E}^{3-3k}(\bu) D_{d-1}\Big(\underbrace{A^i(\bu),\dots,A^i(\bu)}_{k-1}, \underbrace{H^i_{{\B}}(\bu),\dots,H^i_{{\B}}(\bu)}_{d-k} \Big).
\end{align}
By~\eqref{0950} we have
\begin{align}\label{1605}
    D_{d-1}\Big(\underbrace{A^i(\bu),\dots,A^i(\bu)}_{k-1}, \underbrace{H^i_{{\B}}(\bu),\dots,H^i_{{\B}}(\bu)}_{d-k} \Big)=\frac{1}{(d-1)!}\sum_{\tau\in S_{d-1}} q^{(i)}_\tau(\bu),
\end{align}
where $q^{(i)}_\tau(\bu)$ is the determinant of the matrix composed of $k-1$ columns of $A^i(\bu)$ and $d-k$ columns of $H^i_{{\B}}(\bu)$ chosen according to the permutation $\tau$. Since for  $i=1,\dots,d$ and $\tau\in S_{d-1}$ all $q^{(i)}_\tau(\bu)$ look similar, it is enough to compute only one of them:  the rest will be derived by  changing  the variables. Consider for simplicity the one corresponding to $i=d$ and to the identical permutation $q^{(d)}_{\mathbf{id}}(\bu)$. We have
\begin{align*}
    q^{(d)}_{\mathbf{id}}(\bu)=\det\left(\begin{array}{ccc|ccc}
   a_1^2 (h_{\mathcal E}^2(\bu) - a_1^2 u_1^2) & \ldots & -a_1^2 a_{k-1}^2 u_1 u_{k-1}  & -u_1u_k&\dots &-u_1u_{d-1}\\ 
   \ldots&\ldots&\ldots&\ldots&\ldots&\ldots\\
     - a_{k-1}^2a_1^2  u_{k-1}u_1 & \ldots& a_{k-1}^2 (h_{\mathcal E}^2(\bu) - a_{k-1}^2 u_{k-1}^2) &-u_{k-1}u_k& \ldots & - u_{k-1} u_{d-1}    \\[6pt]
     \hline \rule{0pt}{1\normalbaselineskip}
     - a_k^2a_1^2  u_ku_1 &\ldots&-a_k^2 a_{k-1}^2  u_ku_{k-1}&1-u_k^2&\ldots&
     - u_ku_{d-1}
     \\
     \ldots &  \ldots & \ldots & \ldots & \ldots & \ldots    \\
       -a_{d-1}^2a_1^2   u_{d-1} u_1 & \ldots&- a_{d-1}^2 a_{k-1}^2 u_{d-1}u_{k-1} &-u_{d-1}u_k& \ldots &  1-u_{d-1}^2
\end{array}
\right).
\end{align*}
Taking  the factor $a_j^2$ out of the $j$-th column, $j=1,\dots,k-1,$ leads to
\begin{equation} \label{1945}
q^{(d)}_{\mathbf{id}}(\bu)=\prod_{j=1}^{k-1}a^2_j\cdot \det\left( \begin{array}{c|c}
     C_1 & C_2 \\\hline
     C_3 & C_4
\end{array}\right),
\end{equation}
where
\begin{equation}\label{0623}
\begin{aligned}
C_1&=h_{\mathcal E}^2(\bu) I_{k-1}-(a_1^2u_1,\ldots,a_{k-1}^2u_{k-1})^{\top}(u_1,\ldots,u_{k-1})
= h_{\mathcal E}^2(\bu)I_{k-1}-\bv_1^\top \bu_1,
\\
C_2&=-(u_1,\ldots,u_{k-1})^{\top}(u_k,\ldots,u_{d-1})
=-\bu_1^\top \bu_2,
\\
C_3&=-(a_k^2u_k,\ldots,a_{d-1}^2u_{d-1})^{\top}(u_1,\ldots,u_{k-1})
=-\bv_2^\top \bu_1,
\\
C_4&=I_{d-k}-(u_k,\ldots,u_{d-1})^{\top}(u_k,\ldots,u_{d-1})
=I_{d-k}-\bu_2^\top \bu_2.
\end{aligned}
\end{equation}
Here, we used the notation
\begin{align*}
    \bu_1=(u_1,\ldots,u_{k-1}),\quad &\bu_2=(u_k,\ldots,u_{d-1}),\\
    \bv_1=(a_1^2u_1,\ldots,a_{k-1}^2u_{k-1}),\quad &\bv_2=(a_k^2u_k,\ldots,a_{d-1}^2u_{d-1}).
\end{align*}
Matrix $C_4$ is invertible with 
\begin{equation}\label{eq:invC4}
C_4^{-1}=I_{d-k}+{\bu_2^\top \bu_2\over 1-\|\bu_2\|^2}.
\end{equation}
Using the well-known formula for the determinant of a block matrix (see
e.g.~\cite[Lemma~5]{LM17}), we obtain
\begin{equation}\label{eq:detBlock}
q^{(d)}_{\mathbf{id}}(\bu)=\prod_{j=1}^{k-1}a^2_j\cdot\det(C_1-C_2 C_4^{-1} C_3) \det C_4.  
\end{equation}

First of all, using the Weinstein-Aronszajn identity
\begin{equation}\label{eq:WAidentity}
\det(I_m+M_1M_2)=\det(I_n+M_2M_1),\quad M_1\in\R^{m\times n}, M_2\in\R^{n\times m},
\end{equation}
with $M_1=(1-\|\bu_2\|^2)^{-1}\bu_2^\top , M_2=\bu_2$
we obtain from~\eqref{eq:invC4} that
\begin{equation}\label{eq:detC4}
\det C_4=1-\|\bu_2\|^2.
\end{equation}
Further using~\eqref{0623}  and \eqref{eq:invC4}, we calculate
\begin{align*}
    C_2 C_4^{-1} C_3&=\bu_1^\top \bu_2\left(I_{d-k}+{\bu_2^\top \bu_2\over 1-\|\bu_2\|^2}\right)\bv_2^\top \bu_1
    \\&=\left(1+{\|\bu_2\|^2\over 1-\|\bu_2\|^2}\right)\bu_1^\top(\bu_2\bv_2^\top) \bu_1
    =\lambda\,\bu_1^\top \bu_1,
\end{align*}
where 
\begin{align*}
    \lambda:={\bu_2\bv_2^\top\over 1-\|\bu_2\|^2}\in\R.
\end{align*}
Therefore, 
\begin{align*}
    \det(C_1-C_2 C_4^{-1} C_3)&= \det\left(
    h_{\mathcal E}^2(\bu)I_{k-1}-\bv_1^\top \bu_1-\lambda\,\bu_1^\top \bu_1\right)
    \\
    &=h_{\mathcal E}^{2k-2}(\bu)\det\left(I_{k-1}-h_{\mathcal E}^{-2}(\bu)(\bv_1^\top+\lambda\bu_1^\top)\bu_1\right).
\end{align*}
Again applying the Weinstein-Aronszajn identity with $M_1=\bv_1^\top+\lambda\bu_1^\top$ and $M_2=\bu_1$ and noting that $h_{\mathcal E}^{2}(\bu)=\bu_1\bv_1^\top+\bu_2\bv_2^\top+a_d^2u_d^2$ leads to
\begin{align*}
    \det(C_1-C_2 C_4^{-1} C_3) &=h_{\mathcal E}^{2dk-2}(\bu)\left(1-h_{\mathcal E}^{-2}(\bu)\bu_1(\bv_1^\top+\lambda\bu_1^\top)\right)
    \\&=h_{\mathcal E}^{2k-4}(\bu)\left(h_{\mathcal E}^{2}(\bu)-\bu_1\bv_1^\top-\lambda\bu_1\bu_1^\top\right)
    \\&=h_{\mathcal E}^{2k-4}(\bu)\left(\bu_2\bv_2^\top+a_d^2u_d^2-{\|\bu_1\|^2\over 1-\|\bu_2\|^2}(\bu_2\bv_2^\top)\right)
    \\&=h_{\mathcal E}^{2k-4}(\bu)\frac{u_d^2\bu_2\bv_2^\top+a_d^2u_d^2(u_d^2+\|\bu_1\|^2)}{1-\|\bu_2\|^2}.
\end{align*}
Finally, combining this with~\eqref{eq:detBlock} and~\eqref{eq:detC4} gives
\begin{align}\label{0821}
    q^{(d)}_{\mathbf{id}}(\bu)&=\prod_{j=1}^{k-1}a^2_jh_{\mathcal E}^{2k-4}(\bu)\left(u_d^2\bu_2\bv_2^\top+a_d^2u_d^2(u_d^2+\|\bu_1\|^2)\right)
    \\\notag
    &=u_d^2h_{\mathcal E}^{2k-4}(\bu)\prod_{j=1}^{k-1}a^2_j\left(\sum_{j=k}^{d}a_j^2u_j^2+a_d^2\sum_{j=1}^{k-1}u_j^2\right)
    \\\notag
    &=u_d^2h_{\mathcal E}^{2k-4}(\bu)\prod_{j=1}^{k-1}a^2_j\left(h_{\mathcal E}(\bu)-\sum_{j=1}^{k-1}a_j^2u_j^2+a_d^2\sum_{j=1}^{k-1}u_j^2\right).
\end{align}
Now consider $q^{(d)}_\tau$ for an arbitrary permutation $\tau\in S_{d-1}$. By the simultaneous rearrangement of the rows and columns  (thus non-changing the determinant) it is possible to bring the matrix corresponding to $q^{(d)}_\tau$ to the same block form as in~\eqref{1945}, and repeating the above reasoning we get
\begin{align*}
    q^{(d)}_\tau(\bu)=u_d^2h_{\mathcal E}^{2k-4}(\bu)\prod_{j=1}^{k-1}a^2_{\tau(j)}\left(h_{\mathcal E}(\bu)-\sum_{j=1}^{k-1}a^2_{\tau(j)}u_{\tau(j)}^2+a_d^2\sum_{j=1}^{k-1}u_{\tau(j)}^2\right).
\end{align*}
Summing this up over $\tau\in S_{d-1}$ leads to
\begin{align*}
    \sum_{\tau\in S_{d-1}}q^{(d)}_\tau(\bu)&=(k-1)!(d-k)!u_d^2h_{\mathcal E}^{2k-4}(\bu) \sum_{\substack{I\subset J_d \\ |I| =k-1}}
    \left[\prod_{j\in I}a^2_j\left(a_d^2u_d^2+\sum_{j\in J_d\setminus I}a^2_ju_j^2+a_d^2\sum_{j\in I}u_j^2\right)\right],
\end{align*}
where we used the notation $J_i:=\{1,\dots,d\}\setminus \{i\}$ for $i=1,\dots,d$. 
Similarly, for arbitrary $i$ we have
\begin{align}\label{1023}
    \sum_{\tau\in S_{d-1}} \!\!\! q_\tau^{(i)}(\bu)&=(k-1)!(d-k)!h_{\mathcal E}^{2k-4}(\bu)
    u_i^2 \!\!\!\! \sum_{\substack{I\subset J_i \\ |I| =k-1}} \!\!\!
    \left[\prod_{j\in I}a^2_j\left(a_i^2u_i^2+\sum_{j\in J_i\setminus I}a^2_ju_j^2+a_i^2\sum_{j\in I}u_j^2\right) \!\! \right] \!\! .
\end{align}
Let us sum this up over $i$ dealing with each summand in the inner brackets separately. To simplify the notation, we will write
\begin{align*}
    \ba^i:=(a_1^2,\dots,a_{i-1}^2,a_{i+1}^2,\dots,a_d^2), &\quad 1\leq i\leq d,
    \\
    \ba^{i,j}:=(a_1^2,\dots,a_{i-1}^2,a_{i+1}^2,\dots,a_{j-1}^2,a_{j+1}^2,\dots,a_d^2),&\quad 1\leq i<j\leq d,
\end{align*}
and $\ba^{i,j}=\ba^{j,i}$ for $i>j$.
Firstly,
\begin{align}\label{1022}
    \sum_{i=1}^d u_i^2
    \sum_{\substack{I\subset J_i \\ |I| =k-1}}
    \left[\prod_{j\in I}a^2_j\cdot (a^2_iu_i^2)\right]
    =
    \sum_{i=1}^da^2_iu_i^4\cdot s_{k-1}(\ba^i).
\end{align}
Secondly, by exchanging the summation order we get
\begin{align*}
    \sum_{i=1}^d u_i^2
    \sum_{\substack{I\subset J_i \\ |I| =k-1}}
    \left[\prod_{j\in I}a^2_j\sum_{j\in J_i\setminus I}
    a^2_ju_j^2\right]&=\sum_{i=1}^d u_i^2
    \sum_{j\in J_i}a^2_ju_j^2\sum_{\substack{I\subset J_{i,j} \\ |I| =k-1}}\prod_{j\in I}a^2_j\\
    &=
    \sum_{\substack{i,j=1 \\ i\ne j}}^da^2_iu_i^2u_j^2\cdot s_{k-1}(\ba^{i,j}),
\end{align*}
where {$J_{i,j}:=\{1,\ldots,d\}\setminus\{i,j\}$ for any $i\neq j$}. 
 Finally, again by exchanging the summation order
\begin{align*}
    \sum_{i=1}^d u_i^2
    \sum_{\substack{I\subset J_i \\ |I| =k-1}}
    \left[\prod_{j\in I}a^2_j\left(a_i^2\sum_{j\in I}u_j^2\right)\right]
        &=
    \sum_{i=1}^d a_i^2u_i^2\sum_{j\in J_i}a_j^2u_j^2
    \sum_{\substack{I\subset J_{i,j} \\ |I| =k-2}}
    \prod_{j\in I}a^2_j
    \\&=
    \sum_{\substack{i,j=1 \\ i\ne j}}^da^2_iu_i^2a_j^2u_j^2\cdot s_{k-2}(\ba^{i,j}).
\end{align*}
Further we note that
\begin{align*}
    s_{k-1}(\ba^i)=s_{k-1}(\ba^{i,j})+a_j^2s_{k-2}(\ba^{i,j})
\end{align*}
and thus
\begin{align*}
    \sum_{i=1}^d u_i^2
    \sum_{\substack{I\subset J_i \\ |I| =k-1}}
    \left[\prod_{j\in I}a^2_j\sum_{j\in J_i\setminus I}
    a^2_ju_j^2\right]
    +
    \sum_{i=1}^d u_i^2
    \sum_{\substack{I\subset J_i \\ |I| =k-1}}
    \left[\prod_{j\in I}a^2_j\left(a_i^2\sum_{j\in I}u_j^2\right)\right]
    \\
        =\sum_{\substack{i,j=1 \\ i\ne j}}^da^2_iu_i^2u_j^2\cdot (s_{k-1}(\ba^{i,j})+a_j^2s_{k-2}(\ba^{i,j}))
        =
        \sum_{\substack{i,j=1 \\ i\ne j}}^da^2_iu_i^2u_j^2\cdot s_{k-1}(\ba^i).
\end{align*}
Combining this with~\eqref{1023} and~\eqref{1022} we arrive at
\begin{align*}
    \sum_{i=1}^d\sum_{\tau\in S_{d-1}}q_\tau^{(i)}(\bu)
    &=
    (k-1)!(d-k)!h_{\mathcal E}^{2k-4}(\bu)
    \left(\sum_{i=1}^da^2_iu_i^4\cdot s_{k-1}(\ba^i)+\sum_{\substack{i,j=1 \\ i\ne j}}^da^2_iu_i^2u_j^2\cdot s_{k-1}(\ba^i)\right)
    \\&=
    (k-1)!(d-k)!h_{\mathcal E}^{2k-4}(\bu)\sum_{i=1}^da^2_iu_i^2s_{k-1}(\ba^i),
\end{align*}
where in the last step we used that $\sum_{i=1}^d u_i^2=1$. Recalling~\eqref{2038}--\eqref{1605} concludes the proof.

\subsection{Proof of Proposition~\ref{0831}}
The key ingredient of the proof is the following simple observation: for $c,\gamma>0$ we have
\begin{align}\label{1113}
    \int_{0}^{\infty}t^{\gamma-1} e^{-ct^2}\dd t
    =
    \frac{c^{-\gamma/2}}2\int_{0}^{\infty}(ct^2)^{\gamma/2-1} e^{-ct^2}\dd (ct^2)
    =
    \frac{c^{-\gamma/2}}2\Gamma\bigg(\frac{\gamma}2\bigg).
\end{align}
Passing  to the spherical coordinates and having in mind that $h_{\mathcal E}(\cdot)$ is homogeneous of degree 1 we can write 
\begin{align}\label{1137}
     \int\limits_{\R^d}|x_i|^\alpha h_{\mathcal E}^{-\beta}(\bx)e^{-\|\bx\|^2} \dd\bx&=\int\limits_{\s^{d-1}}|u_i|^\alpha h_{\mathcal E}^{-\beta}(\bu)\int_0^\infty r^{d+\alpha-\beta-1} e^{-r^2}\dd r\sigma(\dd\bu)
     \\\notag
     &=\frac12\Gamma\bigg({d+\alpha-\beta\over 2}\bigg)\int\limits_{\s^{d-1}}|u_i|^\alpha h_{\mathcal E}^{-\beta}(\bu)\sigma(\dd\bu),
\end{align}
where in the last step we used~\eqref{1113} with $c=1, \gamma=d+\alpha-\beta$. Now using~\eqref{1137} and applying~\eqref{1113} with $c=h_{\mathcal E}^{2}(\bx), \gamma=\beta$ leads to
\begin{align}\label{1507}
     \int\limits_{\s^{d-1}}|u_i|^\alpha &h^{-\beta}_{\mathcal E}(\bu)\,\sigma(\dd\bu)={2\over \Gamma({d+\alpha-\beta\over 2})}\int\limits_{\R^d}|x_i|^\alpha h_{\mathcal E}^{-\beta}(\bx)e^{-\|\bx\|^2} \dd\bx
    \\\notag
    &={2\over \Gamma({d+\alpha-\beta\over 2})}
    \int\limits_{\R^d}
    \bigg[{2\over \Gamma({\beta\over 2})}\int_{0}^{\infty}t^{\beta-1}e^{-h_{\mathcal E}^2(\bx)t^2}\dd t\bigg]|x_i|^\alpha e^{-\|\bx\|^2} \dd\bx
    \\\notag
    &={4\over \Gamma({d+\alpha-\beta\over 2})\Gamma({\beta\over 2})}\int_{0}^{\infty}t^{\beta-1}\bigg[\int_{-\infty}^{\infty}|x_i|^\alpha e^{-(a_i^2t^2+1)x_i^2}\dd x_i\prod_{j\neq i}\int_{-\infty}^{\infty}e^{-(a_j^2t^2+1)x_j^2}\dd x_j\bigg]\dd t,
\end{align}
where in the last step we used that $h_{\mathcal E}^2(\bx)=a^2_1x^2_1+\dots+a^2_dx^2_d$ and the Fubini theorem. Applying~\eqref{1113}  to the inner integrals gives
\begin{align*}
    \int_{-\infty}^{\infty}|x_i|^\alpha e^{-(a_i^2t^2+1)x_i^2}\dd x_i
    =
    2\int_{0}^{\infty}x_i^\alpha e^{-(a_i^2t^2+1)x_i^2}\dd x_i
    =
    {\Gamma(\frac{\alpha+1}{2})\over (a_i^2t^2+1)^{(\alpha+1)/2}}
\end{align*}
and
\begin{align*}
    \int_{-\infty}^{\infty}e^{-(a_j^2t^2+1)x_j^2}\dd x_j
    =
    2\int_{0}^{\infty} e^{-(a_j^2t^2+1)x_j^2}\dd x_j
    =
    {\sqrt\pi \over (a_j^2t^2+1)^{1/2}},
\end{align*}
which together with~\eqref{1507} concludes the proof.

\subsection{Proof of Theorem~\ref{0948}}
Theorem~\ref{0948} follows from Theorem~\ref{1406} and Proposition~\ref{0831} applied with $\alpha=2,\beta=k$, and the observation that for such $\alpha, \beta$ we have
\begin{align*}
    {1\over k\kappa_{d-k}}\cdot
     {4\pi^{(d-1)/2} \Gamma(\frac{3}{2})\over \Gamma({d+2-k\over 2})\Gamma({k\over 2})}=\kappa_k.
\end{align*}

\section*{Acknowledgements}
The authors thank Jairo Bochi from Pennsylvania State University for his reference suggestions and helpful comments on hypergeometric functions.

\bibliographystyle{plain}

\end{document}